\documentclass[letterpaper,10pt,conference]{ieeeconf}

\IEEEoverridecommandlockouts
\usepackage{amsmath,amssymb,amsfonts,mathrsfs,mathtools}
\usepackage{graphicx}
\usepackage{caption}
\usepackage{subcaption}
\usepackage{algorithm}
\usepackage{algorithmic}
\usepackage{float}
\usepackage{textcomp}
\usepackage{xcolor}
\usepackage{cite}
\usepackage[hidelinks]{hyperref}
\usepackage{lipsum}

\newif\ifrevisions
\revisionsfalse
\newcommand{\rev}[1]{\ifrevisions\textcolor{blue}{#1}\else#1\fi} % inline text/math
\makeatletter
\let\NAT@parse\undefined
\makeatother

\newtheorem{definition}{Definition}
\newtheorem{theorem}{Theorem}

\newtheorem{corollary}{Corollary}

\newcommand{\R}{\mathbb{R}}

\title{\LARGE \bf
Global Exponential Stabilization of a 3D Nonholonomic Vehicle\\ in Spherical Coordinates
}

\author{Kwang Hak Kim, Velimir Todorovski, and Miroslav Krsti\'c%
\thanks{This work was supported by the Office of Naval Research under grants N00014-23-1-2376 and N00014-23-1-2831. The results and opinions in this paper are solely of the authors and do not reflect the position or the policy of the U.S. Government.}
\thanks{K. H. Kim, V. Todorovski, and M. Krsti\'c are with the Department of Mechanical and Aerospace Engineering, UC San Diego, 9500 Gilman Drive, La Jolla, CA, 92093-0411, {\tt\small \{kwk001,vtodorovski,krstic\}@ucsd.edu}}%
}

\begin{document}

\maketitle
\thispagestyle{empty}
\pagestyle{empty}

%%%%%%%%%%%%%%%%%%%%%%%%%%%%%%%%%%%%%%%%%%%%%%%%%%%%%%%%%%%%%%%%%%%%%%%%%%%%%%%%

\begin{abstract}
Many spatial (3D) vehicles, including AUVs and fixed-wing aircraft, are effectively nonholonomic and subject to limited actuation, such that continuous time-invariant stabilization is fundamentally obstructed by Brockett’s necessary condition. To overcome this obstruction, we exploit the geometric singularity of spherical coordinates to design a backstepping continuous, time-invariant feedback law that exponentially stabilizes a 3D nonholonomic vehicle actuated solely by forward surge velocity, pitch rate, and yaw rate. The resulting closed-loop region of attraction excludes only the coordinate singularity, codimension-two, measure-zero set of initial conditions in which the vehicle lies on the line through the target orthogonal to the target plane, thereby covering the largest possible domain. We further construct a strict control Lyapunov function to prove global exponential stability of the origin on this domain with a user-specified decay rate, while simultaneously preventing the system from approaching the singular set. Finally, we show that the closed-loop system is exponentially attractive to the origin in Cartesian coordinates. Numerical simulation examples in both spherical and Cartesian coordinates illustrate the effectiveness of the control law.
\end{abstract}

%%%%%%%%%%%%%%%%%%%%%%%%%%%%%%%%%%%%%%%%%%%%%%%%%%%%%%%%%%%%%%%%%%%%%%%%%%%%%%%%

\section{Introduction}

Nonholonomic vehicle models are fundamental in robotics and autonomy because they capture, in a minimal kinematic form, the essential motion constraints and actuation limits of many real platforms. While these geometric constraints make the models attractive for analysis and controller synthesis, they also give rise to a fundamental obstruction: in the usual Cartesian coordinates, time-invariant state feedback, both continuous and discontinuous, cannot asymptotically stabilize the system, as formalized by the Brockett–Ryan-Coron–Rosier conditions \cite{brockett1983asymptotic,coron1994relation, ryan1994brockett}.

To navigate Brockett’s obstruction in the canonical unicycle system, researchers employ several strategies, each with inherent trade-offs. While time-varying laws~\cite{samson1990velocity,coron1992global_controllable} provide stabilization, they are often delay-sensitive and oscillatory. Discontinuous controllers~\cite{de2000stabilization,bloch1996stabilization_slidingmode} frequently encounter chattering, and hybrid strategies~\cite{hespanha1999_hybrid_stabilization,prieur2003robust} struggle with zig-zagging trajectories. In contrast, polar coordinate transformations circumvent these issues via a singular transformation,  allowing for continuous time-invariant feedback and strict control Lyapunov functions (CLFs)~\cite{aicardi1995,wang24_force_controlled_safestable,Part1_todorovskiCLF2025}. Beyond the theoretical advantages such as inverse optimality~\cite{Part2_kimIOC2025}, this framework is naturally compatible with the egocentric sensing provided by, for example, modern LiDAR, radar, or camera-based sensors.

In this paper, we focus on the spatial nonholonomic models actuated in forward surge velocity, pitch rate, and yaw rate. This underactuated model is inherent to the 3D motion of many aerial systems~\cite{vamvoudakis2022nonequilibrium,lugo2014dubins,chitsaz2007time} and underwater platforms~\cite{lapierre2003nonlinear,caccia2000guidance}. While this problem has been approached extensively via path-following and reference tracking formulations~\rev{\cite{alonge2001trajectory,aguiar2007trajectory,fossen2023alos,fossen2024alos3d}}, the stabilization of nonholonomic vehicles is well known to be a fundamentally more challenging task~\cite{jiang2010controlling,deluca1998feedback}. This discrepancy stems from the fact that tracking is based on a moving reference, for which the system can be made to follow under suitable persistence of excitation conditions.

As in the planar case, the stabilization of 3D nonholonomic vehicles primarily relies on time-varying~\cite{li2018design,pettersen1999time,do2002global}, discontinuous~\cite{egeland1994exponential,egeland1996feedback}, or logic-based hybrid strategies~\cite{aguiar2002global}. These approaches suffer from the same problems inherent to the unicycle case. Notably, while \cite{Heetal2022exp3D} proposes an exponentially stabilizing discontinuous feedback design, this approach remains limited to a semi-global result. Its reliance on state-dependent gain conditions, which the authors acknowledge are difficult to verify, and on a transformation with singular sets, restricts the applicability of this result.

% The reliance on state-dependent gain conditions, which the authors acknowledge are inherently difficult to verify, along with the challenges of singular sets in the underlying transformation, restricts the broader application of this result.

% Just as polar coordinates provide a natural framework for continuous time-invariant stabilization of the planar unicycle~\cite{Part1_todorovskiCLF2025,Part2_kimIOC2025}, a spherical representation in three dimensions offers the same promise for spatial nonholonomic vehicles. Beyond circumventing Brockett’s obstruction through a coordinate singularity, these coordinates also provide a relative geometric description well suited to modern sensors.
Just as polar coordinates enable continuous time-invariant stabilization of the planar unicycle~\cite{Part1_todorovskiCLF2025,Part2_kimIOC2025}, a spherical representation extends this promise to spatial nonholonomic vehicles, circumventing Brockett's obstruction through a coordinate singularity while providing a relative geometric description well suited to modern sensors. Lyapunov-based designs in polar-like coordinates exist for the 3D vehicle, but rely on discontinuous feedback or on piecewise-defined feedback with cascade arguments and negative-semidefinite Lyapunov derivatives~\cite{aicardi2001cusp,restrepo_3d_2019}, in either case guaranteeing only asymptotic, non-exponential convergence and yielding no strict CLF. This limitation prevents systematic extensions such as inverse optimal control~\cite{Part2_kimIOC2025}.

In this work, we develop a continuous time-invariant stabilizing design for the 3D nonholonomic vehicle directly in spherical coordinates. We derive the spherical model on the largest well-defined set, excluding only the codimension-two measure-zero set where the transformation is undefined, and introduce a backstepping transformation yielding explicit smooth feedback laws for the surge, pitch, and yaw inputs. We further construct a strict barrier-type CLF whose unboundedness at the boundary keeps closed-loop trajectories in the well-defined set, establishing global exponential stability of the origin with user-assigned decay rates and, in Cartesian coordinates, exponential attractivity from all initial conditions outside the singular set.

% In this work, we develop a continuous time-invariant stabilizing design for the 3D nonholonomic vehicle by working directly in spherical coordinates. We first derive the spherical coordinate model on the largest well-defined set, excluding only the codimension-two measure-zero set where the transformation is undefined. We then introduce a backstepping transformation that yields explicit smooth feedback laws for the surge, pitch, and yaw inputs. Additionally, we construct a strict barrier-type CLF. The boundary of this CLF guarantees that closed-loop trajectories remain in the well-defined set. This approach establishes global exponential stability of the origin on that set with user-defined decay rates. The result further implies exponential attractivity in Cartesian coordinates for all initial conditions outside the singular set.

% The main contribution is a compact conference-version derivation of:
% \begin{enumerate}
%     \item a spherical-coordinate model and a natural domain of attraction that avoids angular singularities,
%     \item a backstepping transformation that creates stabilizing error coordinates for line-of-sight and pitch geometry,
%     \item explicit continuous feedback laws for $(v,r,q)$, and
%     \item a strict Lyapunov proof of global exponential stability (GES) on the domain.
% \end{enumerate}

% \lipsum[1-5]

%%%%%%%%%%%%%%%%%%%%%%%%%%%%%%%%%%%%%%%%%%%%%%%%%%%%%%%%%%%%%%%%%%%%%%%%%%%%%%%%

\section{3D Model in Spherical Coordinates}

Consider the 3D nonholonomic kinematic model~\cite{fossen1994guidance}:
\begin{subequations}\label{eq:cart_3d_sys}
\begin{align}
\dot x &= v \cos\psi \cos\theta\label{eq:x_dot}\\
\dot y &= v \sin\psi \cos\theta\label{eq:y_dot}\\
\dot z &= v \sin\theta\\
\dot \psi &= \frac{r}{\cos\theta}\label{eq:theta_dot}\\
\dot \theta &= q,
\end{align}
\end{subequations}
where $(x,y,z)\in\R^3$ is the position, \rev{$\psi \in \mathbb{R}$ is yaw, $\theta \in\mathbb{R}$ is pitch}, and $(v,q,r)$ are forward velocity (surge), pitch rate, and yaw rate inputs respectively. To avoid the singularity caused by the division of $\cos\theta$ in~\eqref{eq:theta_dot}  and simplify the design, we define
\begin{equation}
\label{eq:omega_actual}
%, \eqref{eq:angular_velocity_genova}
r = \tilde{r} \cos\theta\,.
\end{equation}
This choice is physically meaningful because when $\cos\theta = 0$, i.e., when the vehicle is in a vertical position, changes in yaw do not affect the direction of motion, so steering in the yaw direction is inconsequential. This can be seen from~\eqref{eq:x_dot} and~\eqref{eq:y_dot}, where $\dot x=\dot y=0$ occurs whenever $\cos\theta=0$, regardless of $\psi$, implying that the yaw angle only parameterizes the horizontal velocity direction and becomes irrelevant when the horizontal component vanishes.

\begin{figure}[t]
\centering
\includegraphics[width=0.9\linewidth]{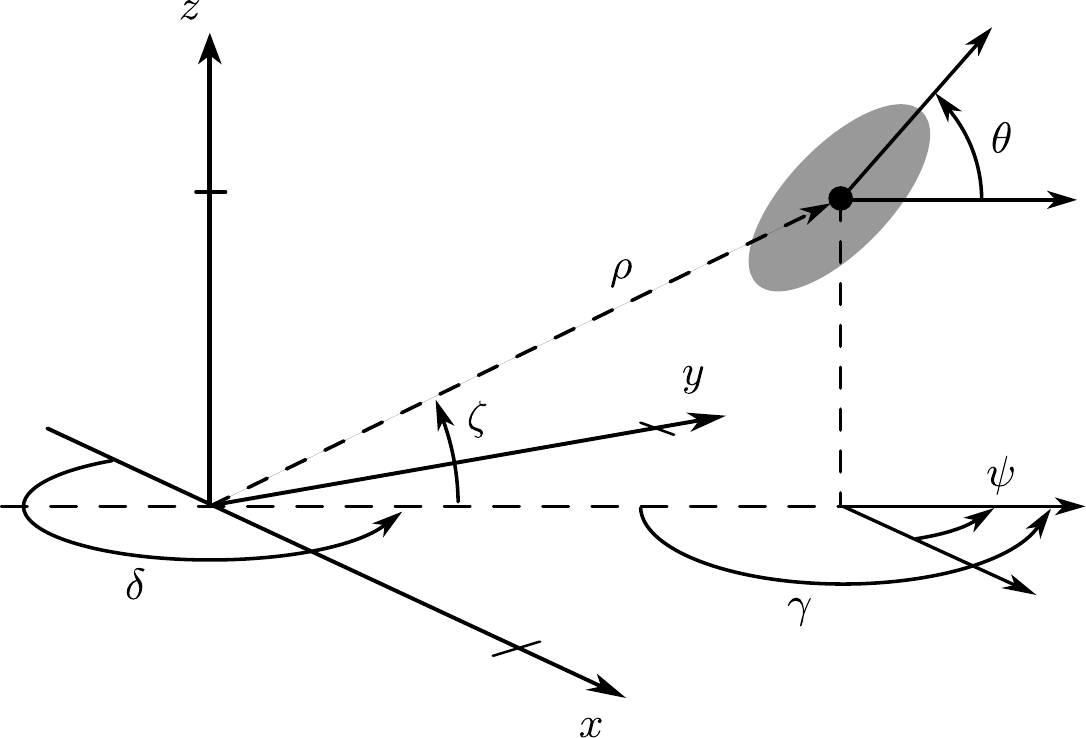}
\caption{Kinematic model of a 3D nonholonomic vehicle.}
\label{fig:3d_unicycle}
\end{figure}

System~\eqref{eq:cart_3d_sys}, however, is not stabilizable with a time-invariant continuous control law due to Brockett's condition~\cite{brockett1983asymptotic}. We circumvent this obstruction by introducing a spherical coordinate state transformation defined in Table~\ref{tab:spherical_coords} and Figure~\ref{fig:3d_unicycle}. The corresponding spherical dynamics are given as
% \begin{subequations}\label{eq:spheric_3d_sys_pre}
% \begin{align}
% \dot{\rho} &= v\bigl(\sin\theta\sin\zeta-\cos\theta\cos\zeta\cos\gamma\bigr) \label{eq:sph_rho_pre}\\
% \dot{\delta} &= \frac{v\cos\theta}{\rho\cos\zeta}\sin\gamma \label{eq:sph_delta_pre}\\
% \dot{\gamma} &= \frac{v\cos\theta}{\rho\cos\zeta}\sin\gamma - \frac{r}{\cos\theta} \label{eq:sph_gamma_pre}\\
% \dot{\zeta} &= \frac{v}{\rho}\bigl(\cos\theta\sin\zeta\cos\gamma+\sin\theta\cos\zeta\bigr) \label{eq:sph_phi_pre}\\
% \dot{\theta} &= q. \label{eq:sph_psi_pre}
% \end{align}
% \end{subequations}

\vspace{-0.5cm}
\begin{subequations}\label{eq:spheric_3d_sys}
\begin{align}
\dot{\rho} &= v\bigl(\sin\theta\sin\zeta-\cos\theta\cos\zeta\cos\gamma\bigr)\label{eq:sph_rho}\\
\dot{\delta} &= \frac{v\cos\theta}{\rho\cos\zeta}\sin\gamma\label{eq:sph_delta}\\
\dot{\gamma} &= \frac{v\cos\theta}{\rho\cos\zeta}\sin\gamma - \tilde{r} \label{eq:sph_gamma}\\
\dot{\zeta} &= \frac{v}{\rho}\bigl(\cos\theta\sin\zeta\cos\gamma+\sin\theta\cos\zeta\bigr) \label{eq:sph_phi}\\
\dot{\theta} &= q. \label{eq:sph_psi}
\end{align}
\end{subequations}

However, the spherical transformation in Table~\ref{tab:spherical_coords} is undefined on the $z$-axis ${\{x=y=0\}}$. Namely, when the vehicle lies on the line through the target orthogonal to the target plane. When the horizontal range $\sqrt{x^2+y^2} = \rho\cos\zeta$ vanishes, the term ${\rm atan2}(-y,-x)$ is undefined, leaving the angles $\delta$ and $\gamma$ undetermined. This is reflected in \eqref{eq:sph_delta}--\eqref{eq:sph_gamma} through the factor $1/(\rho\cos\zeta)$ resulting in a singularity, motivating the restriction to the natural domain
\begin{align}\label{eq:domainD}
\mathcal D \coloneqq \left\{\rho > 0\right\}\times \mathcal{T},\quad \mathcal{T} \coloneqq \left\{(\delta,\gamma,\zeta,\theta)\in\mathbb{R}^4 : |\zeta|<\frac{\pi}{2}\right\}
% \mathcal D \coloneqq \left\{\Theta \in\R^5 \,\big|\, \rho>0,\ |\zeta|<\frac{\pi}{2}\right\},
\end{align}
on which $\cos\zeta>0$ and the coordinate map is well defined. The excluded set is a codimension-two subspace of $\mathbb{R}^5$ and is therefore of measure zero. Thus, the exclusion does not disconnect the ambient space.

\begin{table}[t]
\centering
\renewcommand\arraystretch{1.4}
\small
\begin{tabular}{|l|l|}
\hline
\textbf{Spherical Coord.} & \textbf{Description} \\
\hline
$\rho = \sqrt{x^2 + y^2 + z^2}$ & Distance to target \\ \hline
$\delta = {\rm atan2}(-y,-x)$ & Azimuth angle \\ \hline
$\gamma = {\rm atan2}(-y,-x) - \psi$ & $z$-axis line-of-sight (LOS) angle \\ \hline
$\zeta = \arcsin\!\left(\dfrac{z}{\rho}\right)$ & Elevation angle \\ \hline
$\theta = \theta$ & Pitch angle \\ \hline
\end{tabular}
\caption{Spherical transformation and interpretation. If the target is at ${(x^*, y^*, z^*, \psi^*, \theta^*) \neq 0}$, the transformation generalizes to ${\rho=\sqrt{(x-x^*)^2+(y-y^*)^2+(z-z^*)^2}}$, ${\delta= {\rm atan2}(y^*-y ,x^*-x ) -\psi^*}$, ${\gamma= \delta-\psi+\psi^*}$, $\zeta = \arcsin\big(\frac{z-z^*}{\rho}\big)$, and $\theta - \theta^*$.}
\label{tab:spherical_coords}
\end{table}

Importantly, this is not a consequence of our coordinate choice. Any spherical (or cylindrical) transformation must exclude at least one ``polar'' set since no bearing angle can be defined when the horizontal projection vanishes. The singularity can be shifted by changing the reference plane but not eliminated. Hence, \eqref{eq:domainD} is the largest open set on which a continuous time-invariant design is possible in these coordinates, and this very singularity permits circumventing Brockett’s condition.

For that reason, we now introduce a definition of global exponential stability that is compatible with the domain and coordinate singularity of our transformation. To this end, define 
\begin{align}
    \Theta \coloneqq (\rho,\delta,\gamma,\zeta,\theta)^\top\,,
\end{align}
% $\Theta \coloneqq (\rho,\delta,\gamma,\zeta,\theta)^\top$ 
and the metric
\begin{align}\label{eq:metricD}
|\Theta|_{\mathcal D}
\coloneqq \rho+|\delta|+|\gamma|+|\tan\zeta|+|\theta|.
\end{align}
The choice of $|\tan\zeta|$ is natural since it becomes unbounded as $|\zeta|\to \pi/2$, capturing the coordinate singularity at the boundary of $\mathcal D$. We now state the definitions of global exponential stability and a strict CLF.

\begin{definition}[GES on $\mathcal{D}$]
\label{def-our-GES}
Consider the system \eqref{eq:spheric_3d_sys}, with a feedback law $v,q$ and $r$ that is continuous on a state space $\mathcal{D}$ with respect to its metric. If there exist $K \geq 1$ and $\lambda > 0$ such that, for all $t\geq t_0$, it holds that $|\Theta(t)|_{\mathcal{D}}\leq K|\Theta(t_0)|_{\mathcal{D}}e^{-\lambda (t-t_0)}$, we say that the point $\Theta = 0$ is {\em globally exponentially stable (GES) on $\mathcal{D}$}. 
\end{definition}

\begin{definition}[Strict CLF for the 3D nonholonomic vehicle]
\label{def-CLF}
A continuously differentiable function $\Theta\mapsto V$
is a \textit{control Lyapunov function} (CLF) with respect to
\eqref{eq:spheric_3d_sys} if (i) there exist class $\mathcal{K}$
functions $(\bar\alpha_1,\bar\alpha_2)$ such that, for all $\Theta$ in
$\Sigma = \{\rho\geq 0\}\times \mathcal{T}$,
$\bar\alpha_1(|\Theta|_{\mathcal{D}}) \le V(\Theta) \le
\bar\alpha_2(|\Theta|_{\mathcal{D}})$, and (ii) for all $\Theta \neq 0$
in $\Sigma$, there exists
$\left(\dfrac{v}{\rho},q,\dfrac{r}{\cos\theta}\right)\in \mathbb{R}^3$
such that $\dot V(\Theta) < 0$.
\end{definition}

% \begin{definition}[CLF for the 3D nonholonomic vehicle~\eqref{eq:spheric_3d_sys}]
% \label{def-CLF}
% A continuously differentiable function $V(\Theta)$ is a strict \textit{control Lyapunov function} (CLF) for \eqref{eq:spheric_3d_sys} if (i) there exist class $\mathcal{K}$ functions $(\alpha_1, \alpha_2)$ such that, for all ${\Theta \in \mathcal{D}}$, ${\alpha_1(|\Theta|_{{\mathcal{D}}}) \le V(\Theta) \le \alpha_2(|\Theta|_{{\mathcal{D}}})}$, and (ii) there exists $\left(\dfrac{v}{\rho},q,\dfrac{r}{\cos\theta}\right)\in \mathbb{R}^3$ such that $\dot V(\Theta) < 0$ for all $\Theta \neq 0$ in~$\mathcal{D}$.
% \end{definition}

%%%%%%%%%%%%%%%%%%%%%%%%%%%%%%%%%%%%%%%%%%%%%%%%%%%%%%%%%%%%%%%%%%%%%%%%%%%%%%%%

\section{Backstepping Transformation and Barrier CLF}

We now introduce a backstepping transformation to develop the feedback control laws. First, let $k_2,k_3>0$ be arbitrary gains and define
\begin{align}
e_1 &= \gamma + \arctan(k_2\delta\cos\zeta)\label{eq:e1_def}\\
e_2 &= \theta + \arctan\bigl(\eta(\delta,\zeta)\bigr)\,,\label{eq:e2_def}
\end{align}
where
\begin{align}\label{eq:eta_def}
\eta(\delta,\zeta)\coloneqq
\frac{k_3\sin\zeta+\tan\zeta}{\sqrt{1+k_2^2\delta^2\cos^2\zeta}}\,.
\end{align}
The expression $\eta(\delta,\zeta)$ is well defined on $\mathcal D$ since $|\zeta|<\pi/2$ implies $\cos\zeta\neq 0$.
% Define the auxiliary functions
% \begin{align}
% \sigma_1(r,s) &\coloneqq \frac{\sin(r-s)+\sin s}{r} \label{eq:sigma1_def}\\
% \sigma_2(r,s) &\coloneqq \frac{\partial\sigma_1(r,s)}{\partial s}
% = \frac{\cos s - \cos(r-s)}{r}\,, \label{eq:sigma2_def}
% \end{align}
% which admit continuous extensions at $r=0$ with $\sigma_1(0,s)=\cos s$ and $\sigma_2(0,s)=-\sin s$.
% % \begin{align}
% % \sigma_1(0,s)&=\cos s\\\sigma_2(0,s)=-\sin s.
% % \end{align}
% Using \eqref{eq:e1_def} and \eqref{eq:e2_def}, yields the following equivalent expressions
% \begin{align}
% \sin\gamma &= -\frac{k_2\delta\cos\zeta}{\sqrt{1+k_2^2\delta^2\cos^2\zeta}}
% + e_1\sigma_1(e_1,\gamma) \label{eq:sin_gamma_exp}\\
% \cos\gamma &= \frac{1}{\sqrt{1+k_2^2\delta^2\cos^2\zeta}}
% + e_1\sigma_2(e_1,\gamma) \label{eq:cos_gamma_exp}\\
% \sin\theta &= -\frac{\eta(\delta,\zeta)}{\sqrt{1+\eta^2(\delta,\zeta)}}
% + e_2\sigma_1(e_2,\theta) \label{eq:sin_psi_exp}\\
% \cos\theta &= \frac{1}{\sqrt{1+\eta^2(\delta,\zeta)}}
% + e_2\sigma_2(e_2,\theta). \label{eq:cos_psi_exp}
% \end{align}
Moreover, ``non-unwinding'' control laws can also be derived, as in~\cite[Sec.~IX]{Part1_todorovskiCLF2025}, by suitably modifying the backstepping transformations~\eqref{eq:e1_def} and~\eqref{eq:e2_def}. However, unwinding can alternatively be avoided through proper initialization and state-estimation schemes that wrap the angles to $[-\pi,\pi]$, in which case any asymptotically stabilizing controller prevents angle unwinding. Hence, we focus on the global case.

Now, consider the candidate Lyapunov function
\begin{align}\label{eq:V_def}
V(\Theta)
= \frac{1}{2}\left(\rho^2+\delta^2+\tan^2\zeta+e_1^2+e_2^2\right),
\end{align}
which is aligned with the backstepping construction: $\rho$, $\delta$, and $\tan\zeta$ capture the remaining position-level errors, while $e_1$ and $e_2$ quantify how far the yaw and pitch states are from the backstepped ``desired'' angles in \eqref{eq:e1_def} and \eqref{eq:e2_def}. 

The $\tan^2\zeta$ term in~\eqref{eq:V_def} grows unbounded as $|\zeta| \to \pi/2$, making~\eqref{eq:V_def} a ``barrier Lyapunov function'': any feedback rendering its time derivative along the closed-loop solutions negative definite enforces $|\zeta(t)|<\pi/2$ for all $t$ whenever $|\zeta_0| < \pi/2$. The barrier occurs at $\cos\zeta = 0$, i.e., when the horizontal range $\rho\cos\zeta = \sqrt{x^2+y^2}$ vanishes. Equivalently, on $\{x^2+y^2>0\}$ we have $\tan\zeta = z/\sqrt{x^2+y^2}$, so $\tan^2\zeta$ blows up as the state approaches the excluded polar set $\{x=y=0\}$ with $z \neq 0$. Hence $V$ is positive definite and radially unbounded on $\mathcal D$ with respect to the metric $|\Theta|_{\mathcal D}$, and $\tan^2\zeta$ acts as a barrier preventing trajectories from approaching the coordinate singularity.
% The $\tan^2\zeta$ term in~\eqref{eq:V_def} blows up as $|\zeta| \to \pi/2$, and hence~\eqref{eq:V_def} can be said to be a ``barrier Lyapunov function.'' As a result, any feedback laws that render the time derivative of~\eqref{eq:V_def} along the solutions of the closed-loop system negative definite will enforce the $|\zeta(t)|<\pi/2$ constraint for all $t$ if $|\zeta_0| < \pi/2$. This barrier occurs at $\cos\zeta=0$, i.e., when the horizontal range $\rho\cos\zeta=\sqrt{x^2+y^2}$ vanishes. Equivalently, for $\{x^2+y^2>0\}$ we have $\tan\zeta=z/\sqrt{x^2+y^2}$, so $\tan^2\zeta$ blows up as the state approaches the excluded polar set $\{x=y=0\}$ with $z\neq 0$. Hence, $V$ is positive definite and radially unbounded on $\mathcal D$ with respect to the metric $|\Theta|_{\mathcal D}$, and $\tan^2\zeta$ acts as a barrier that prevents trajectories from approaching the coordinate singularity.

%%%%%%%%%%%%%%%%%%%%%%%%%%%%%%%%%%%%%%%%%%%%%%%%%%%%%%%%%%%%%%%%%%%%%%%%%%%%%%%%

\subsection{Exponentially Stabilizing Feedback and Strict Barrier CLF}

Having established the backstepping transformation, we now construct the feedback control law that renders the origin of~\eqref{eq:spheric_3d_sys} GES on $\mathcal{D}$. Specifically, we leverage the surge velocity to negate the singularity from $1/\rho$, and design the pitch and yaw rate inputs based on the backstepping transformations~\eqref{eq:e1_def} and~\eqref{eq:e2_def}. This result is formalized in our main theorem.

\begin{theorem}\label{thm:GES_main}
Consider the system \eqref{eq:spheric_3d_sys}. Define $(e_1,e_2)$ as in \eqref{eq:e1_def}--\eqref{eq:e2_def}, let $k_1,k_2,k_3,k_4,k_5 > 0$ be arbitrary control gains, define
\begin{align}
N(\delta,\zeta) \coloneqq& \sqrt{1+k_2^2\delta^2\cos^2\zeta}\label{eq:N_def}\\
B(\delta,\zeta) \coloneqq& \sqrt{1+\eta^2(\delta,\zeta)} \label{eq:B_def}\\
C(\delta,\zeta) \coloneqq& 1+\rev{k_3}\cos\zeta+\tan^2\zeta\nonumber\\
&+k_2^2\delta^2\bigl(2+\rev{k_3}\cos\zeta-\cos^2\zeta\bigr)\,,
\end{align}
where the explicit dependence on $(\delta,\zeta)$ is hereafter suppressed for brevity, and introduce the auxiliary functions
\begin{align}
\sigma_1(a,b) &\coloneqq \frac{\sin(a-b)+\sin b}{a} \label{eq:sigma1_def}\\
\sigma_2(a,b) &\coloneqq \frac{\partial\sigma_1(a,b)}{\partial b}
= \frac{\cos b - \cos(a-b)}{a}\,.\label{eq:sigma2_def}
\end{align}
Then, the forward surge velocity input
\begin{align}\label{eq:v_ctrl}
v = k_1 \rho N B\,,
\end{align}
the pitch rate input
\begin{align}\label{eq:pitch_ctrl}
    q=& -k_5e_2 +k_1k_2\delta^2\sigma_2(e_2,\theta)B\nonumber\\
    &+\frac{k_1}{N^2B}\Bigl[k_2^2\delta \cos(\theta)\cos(\zeta)\sin(\gamma)\bigl(\rev{k_3}\sin(\zeta) + \tan(\zeta)\bigr)\nonumber\\
    &- \bigl(\cos(\theta)\sin(\zeta)\cos(\gamma) + \sin(\theta)\cos(\zeta)\bigr)C\Bigr]\nonumber\\
    &-k_1\rho^2 B\Bigl(\sin(\zeta)\sigma_1(e_2,\theta)N - \cos(\zeta) \sigma_2(e_2,\theta)\Bigr)\nonumber\\
    &-\frac{k_1\tan(\zeta)B}{\cos^2\zeta}\Bigl(\sin(\zeta)\sigma_2(e_2,\theta) + \cos(\zeta)\sigma_1(e_2,\theta)N\Bigr)\,,
\end{align}
and the yaw rate input
\begin{align}\label{eq:yaw_ctrl}
    \tilde{r} =& k_4e_1 + \frac{k_1NB}{\cos\zeta}\cos(\theta)\sin(\gamma) +\frac{k_1k_2B}{N}\Biggl[\cos(\theta)\sin(\gamma) \nonumber\\
    &-\delta\left(\cos(\theta)\sin^2(\zeta)\cos(\gamma) + \frac{\sin(\theta)\sin(2\zeta)}{2}\right)\Biggr]\nonumber\\
    &-k_1\rho^2 \cos(\zeta)N\Bigl(\sigma_2(e_1,\gamma) + e_2\sigma_2(e_1,\gamma)\sigma_2(e_2,\theta)B\Bigr)\nonumber\\
    &+\frac{k_1\delta\sigma_1(e_1,\gamma)N}{\cos\zeta}\Bigl(1+e_2\sigma_2(e_2,\theta)B\Bigr)\nonumber\\
    &+\frac{k_1\tan(\zeta)N}{\cos^2\zeta}\rev{\sin(\zeta)}\Bigl(\sigma_2(e_1,\gamma)\nonumber\\
    &+e_2\sigma_2(e_1,\gamma)\sigma_2(e_2,\theta)B\Bigr)\,,
\end{align}
with~\eqref{eq:omega_actual}, renders the point $\Theta = 0$ GES for the closed-loop system on $\mathcal D$ in accordance with Definition~\ref{def-our-GES}. Moreover, the Lyapunov function~\eqref{eq:V_def} is a strict CLF in accordance with Definition~\ref{def-CLF}.
\end{theorem}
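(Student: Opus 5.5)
The plan is to compute $\dot V$ along the closed loop and show that the feedback laws \eqref{eq:v_ctrl}, \eqref{eq:pitch_ctrl}, \eqref{eq:yaw_ctrl} were chosen to cancel every cross term. What should remain is
\begin{align*}
\dot V = -k_1\bigl(\rho^2+\delta^2+\tan^2\zeta\bigr)\,W(\delta,\zeta) - k_4 e_1^2 - k_5 e_2^2,
\end{align*}
or something close to it, with $W$ bounded below by a positive constant. The argument has three parts: this identity, a comparison argument giving the exponential decay, and sandwich bounds on $V$ in terms of $|\Theta|_{\mathcal D}$.

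\textbf{Step 1 (rewriting the trigonometric terms).} First express $\sin\gamma,\cos\gamma,\sin\theta,\cos\theta$ through the backstepping errors using $\sigma_1,\sigma_2$. From \eqref{eq:e1_def}, with $\gamma = e_1 - \arctan(k_2\delta\cos\zeta)$,
\begin{align*}
\sin\gamma = -\frac{k_2\delta\cos\zeta}{N} + e_1\sigma_1(e_1,\gamma),\qquad \cos\gamma = \frac{1}{N}+e_1\sigma_2(e_1,\gamma).
\end{align*}
The same holds for $\theta$ with $\eta$ in place of $k_2\delta\cos\zeta$ and $B$ in place of $N$. Substitute these into \eqref{eq:sph_rho}, \eqref{eq:sph_delta}, \eqref{eq:sph_phi} with $v=k_1\rho NB$; the choice of $v$ cancels the $1/\rho$ factors.

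The nominal parts ($e_1=e_2=0$) should give the following. From $\rho\dot\rho$ we get $-k_1\rho^2(\cos^2\zeta+\ldots)$ from the product $\cos\theta\cos\zeta\cos\gamma$. From $\delta\dot\delta$ we get $-k_1k_2\delta^2$. From $\frac{d}{dt}\tfrac12\tan^2\zeta = \tan\zeta\,\dot\zeta/\cos^2\zeta$ we get $-k_1\tan^2\zeta\,(k_3\cos\zeta+\ldots)$. The purpose of $\eta$ is exactly to make $\sin\theta\cos\zeta$ dominate $\cos\theta\sin\zeta\cos\gamma$ with the correct sign. I will verify that these nominal terms combine into $-k_1[\rho^2 a + k_2\delta^2 + \tan^2\zeta\, b]$ with $a,b$ bounded below uniformly on $\mathcal D$. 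This may require using the $k_3\sin\zeta$ term and the $N,B$ normalizations carefully; in particular $\rho^2$ may only get a coefficient such as $\cos^2\zeta/(NB)\cdot NB$.

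\textbf{Step 2 (the backstepping layers).} The residual terms are linear in $e_1$ and $e_2$, with coefficients built from $\sigma_i$. Next compute $e_1\dot e_1$ and $e_2\dot e_2$. Here $\dot e_1=\dot\gamma + \frac{k_2(\dot\delta\cos\zeta-\delta\sin\zeta\,\dot\zeta)}{N^2}$ and $\dot e_2=q+\dot\eta/B^2$.

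The yaw law \eqref{eq:yaw_ctrl} enters as $-\tilde r$. Its terms cancel the $\delta$-derivative of the $\arctan$ term, the kinematic part $\frac{v\cos\theta\sin\gamma}{\rho\cos\zeta}$, and the $e_1$-linear residuals from Step 1 (the $\rho^2\sigma_2(e_1,\gamma)$, $\delta\sigma_1(e_1,\gamma)$ and $\tan\zeta\,\sigma_2$ terms). It leaves $-k_4e_1^2$. Similarly, $q$ cancels $\dot\eta/B^2$, expanded through $C$, together with the $e_2$-linear residuals, and leaves $-k_5e_2^2$.

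This bookkeeping is the main obstacle. Every term of \eqref{eq:pitch_ctrl} and \eqref{eq:yaw_ctrl} has to be matched, including the cross terms $e_1e_2\sigma_2\sigma_2$, which are assigned to the $e_1$ channel. I would organize the calculation by grouping according to the factors $\rho^2$, $\delta$ and $\tan\zeta/\cos^2\zeta$ and checking each group separately.

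\textbf{Step 3 (conclusion).} Once $\dot V\le -\lambda' V$ with $\lambda'=2\min\{k_1\underline{c},k_4,k_5\}$, the comparison lemma gives $V(t)\le V(t_0)e^{-\lambda'(t-t_0)}$. Finiteness of $V$ keeps $|\tan\zeta|$ bounded, so $|\zeta|<\pi/2$ for all time; $\rho>0$ is preserved because $\dot\rho$ is proportional to $\rho$. Hence solutions stay in $\mathcal D$ and exist globally.

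For the GES estimate, bound $V$ above and below by quadratics in $|\Theta|_{\mathcal D}$. For the lower bound, note $|\gamma|\le|e_1|+|k_2\delta|$ and $|\theta|\le|e_2|+|\eta|$ with $|\eta|\le k_3|\tan\zeta|+|\tan\zeta|$ (using $|\sin\zeta|\le|\tan\zeta|$), which gives $V\ge c|\Theta|_{\mathcal D}^2$. For the upper bound, use $|\arctan x|\le|x|$. Then $|\Theta(t)|_{\mathcal D}\le K|\Theta(t_0)|_{\mathcal D}e^{-\lambda'(t-t_0)/2}$. Finally, the CLF property in Definition~\ref{def-CLF} follows because the constructed feedback values of $(v/\rho,q,\tilde r)$ are finite and make $\dot V<0$ for every $\Theta\neq0$ in $\Sigma$. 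This includes $\rho=0$, since $v/\rho=k_1NB$ is well defined there.
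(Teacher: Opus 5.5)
Your proposal is correct and follows the paper's proof essentially step for step. You rewrite $\sin\gamma,\cos\gamma,\sin\theta,\cos\theta$ through $\sigma_1,\sigma_2$, use $v=k_1\rho NB$ to remove the $1/\rho$, and let $q$ and $\tilde r$ cancel every $e_2$-, $e_1$- and $e_1e_2$-cross term. This leaves $\dot V=-k_1\rho^2\bigl(\frac{1}{\cos\zeta}+k_3\sin^2\zeta\bigr)-k_1k_2\delta^2-k_1k_3\tan^2\zeta-k_4e_1^2-k_5e_2^2\le-2cV$, and the comparison principle finishes the argument.

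Two small remarks. First, the nominal coefficients come out exactly as $\frac{1}{\cos\zeta}+k_3\sin^2\zeta\ge1$ and $k_3$, not the $\cos^2\zeta$ or $k_3\cos\zeta$ you guessed; those guesses would degenerate as $|\zeta|\to\pi/2$ and would break the uniform lower bound you need. Second, your explicit quadratic sandwich bounds on $V$ in terms of $|\Theta|_{\mathcal D}$ are exactly what the paper's class-$\mathcal K$ statement leaves implicit, and they are needed to get the exponential estimate.
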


\begin{proof}
We first note that for all $\Theta \in \mathcal D$, $N\geq 1$, $B\ge 1$, $C \rev{> 1}$, and $\cos\zeta > 0$. Additionally, the auxiliary functions~\eqref{eq:sigma1_def} and~\eqref{eq:sigma2_def} admit continuous extensions at $a=0$ with $\sigma_1(0,b)=\cos b$ and $\sigma_2(0,b)=-\sin b$, and with \eqref{eq:e1_def} and \eqref{eq:e2_def}, yield the following equivalent expressions
\begin{align}
\sin\gamma &= -\frac{k_2\delta\cos\zeta}{\sqrt{1+k_2^2\delta^2\cos^2\zeta}}
+ e_1\sigma_1(e_1,\gamma) \label{eq:sin_gamma_exp}\\
\cos\gamma &= \frac{1}{\sqrt{1+k_2^2\delta^2\cos^2\zeta}}
+ e_1\sigma_2(e_1,\gamma) \label{eq:cos_gamma_exp}\\
\sin\theta &= -\frac{\eta(\delta,\zeta)}{\sqrt{1+\eta^2(\delta,\zeta)}}
+ e_2\sigma_1(e_2,\theta) \label{eq:sin_psi_exp}\\
\cos\theta &= \frac{1}{\sqrt{1+\eta^2(\delta,\zeta)}}
+ e_2\sigma_2(e_2,\theta). \label{eq:cos_psi_exp}
\end{align}
Then, after lengthy calculations, substituting \eqref{eq:v_ctrl} and using \eqref{eq:sin_gamma_exp}--\eqref{eq:cos_psi_exp}, the transformed dynamics of $(\rho,\delta,\gamma)$ yield
\begin{subequations}\label{eq:backstepped_sys1}
    \begin{align}
    \dot\rho =& -k_1\rho\left(\frac{1}{\cos \zeta} + \rev{k_3}\sin^2\zeta\right)\nonumber\\
    &+ k_1\rho B\Bigl(\sin(\zeta)\sigma_1(e_2,\theta)N - \cos(\zeta) \sigma_2(e_2,\theta)\Bigr)e_2\nonumber\\
    & -k_1\rho \cos(\zeta)N\Bigl(\sigma_2(e_1,\gamma) + e_2\sigma_2(e_1,\gamma)\sigma_2(e_2,\theta)B\Bigr)e_1\\
    \dot\delta =& -k_1k_2\delta -\Bigl(k_1k_2\delta\sigma_2(e_2,\theta)B\Bigr)e_2 \nonumber\\
    &+\frac{k_1\sigma_1(e_1,\gamma)N}{\cos\zeta}\Bigl(1+e_2\sigma_2(e_2,\theta)B\Bigr)e_1\\
    \dot\zeta =& -k_1k_3\tan(\zeta)\cos^2(\zeta) \nonumber\\
    &+k_1B\Bigl(\sin(\zeta)\sigma_2(e_2,\theta) + \cos(\zeta)\sigma_1(e_2,\theta)N\Bigr)e_2\nonumber\\
    &+ k_1N\rev{\sin(\zeta)}\Bigl(\sigma_2(e_1,\gamma)+e_2\sigma_2(e_1,\gamma)\sigma_2(e_2,\theta)B\Bigr)e_1\,,
    \end{align}
\end{subequations}
and the error dynamics of $(e_1,e_2)$ are given as
\begin{subequations}\label{eq:backstepped_sys2}
% \hspace*{-0.5cm}
    \begin{align}
    \dot e_1 =& -\rev{\tilde{r}} + \frac{k_1NB}{\cos\zeta}\cos(\theta)\sin(\gamma)+\frac{k_1k_2B}{N}\Biggl[\cos(\theta)\sin(\gamma)\nonumber\\
    &-\delta\left(\cos(\theta)\sin^2(\zeta)\cos(\gamma) + \frac{\sin(\theta)\sin(2\zeta)}{2}\right)\Biggr]\\
    \dot e_2 =& \, q-\frac{k_1}{N^2B}\Bigl[k_2^2\delta \cos(\theta)\cos(\zeta)\sin(\gamma)\bigl(\rev{k_3}\sin(\zeta) + \tan(\zeta)\bigr)\nonumber\\
    &- \bigl(\cos(\theta)\sin(\zeta)\cos(\gamma) + \sin(\theta)\cos(\zeta)\bigr)C\Bigr]\,.
    \end{align}
\end{subequations}
Subsequently, with~\eqref{eq:omega_actual}, choosing $q$ and $\tilde{r}$ as in~\eqref{eq:pitch_ctrl} and~\eqref{eq:yaw_ctrl}, and taking the time derivative of~\eqref{eq:V_def} along the solution of~\eqref{eq:backstepped_sys1} and~\eqref{eq:backstepped_sys2} yield
\begin{align}\label{eq:Vdo}
\dot V =& -k_1\rho^2\left(\frac{1}{\cos\zeta}+\rev{k_3}\sin^2\zeta\right)
-k_1k_2\delta^2\nonumber\\
&-k_1k_3\tan^2\zeta-k_4e_1^2-k_5e_2^2.
\end{align}
Since $|\zeta|<\pi/2$ on $\mathcal D$, we have
\begin{align}
\frac{1}{\cos\zeta}+\rev{k_3}\sin^2\zeta \geq 1\,.
\end{align}
Thus,
\begin{align}\label{eq:Vdot_bound}
\dot V \le -2cV,
\end{align}
where ${c= \min\{k_1,\;k_1k_2,\;k_1k_3,\;k_4,\;k_5\}>0}$. Moreover, since $V$ is positive definite and radially unbounded on $\mathcal D$, there exist $(\bar\alpha_1,\bar\alpha_2)\in\mathcal{K}$ such that
$\bar\alpha_1(|\Theta|_{\mathcal D})\le V(\Theta)\le \bar\alpha_2(|\Theta|_{\mathcal D})$ for all $\Theta\in\mathcal D$. Hence,  \eqref{eq:V_def} is a strict CLF in accordance with Definition~\ref{def-CLF}, and by the comparison principle
\begin{align}\label{eq:ges_bound}
    |\Theta(t)|_\mathcal{D}\leq K|\Theta(t_0)|_\mathcal{D}e^{-\lambda (t-t_0)}\,,
\end{align}
where $K(k_2,k_3) \geq 1$ and $\lambda = c$. Thus, the point $\Theta = 0$ is GES on $\mathcal{D}$ in accordance with Definition~\ref{def-our-GES}.
\end{proof}

From~\eqref{eq:Vdot_bound} we have $V(t)\le V(t_0)$ for all $t\ge t_0$. Since $V(\Theta)\to\infty$ as $|\zeta|\to \pi/2$, no trajectory starting in $\mathcal D$ can reach the boundary $|\zeta|=\pi/2$, and hence, the system cannot approach the singular set. $\mathcal{D}$ is the largest set on which the spherical coordinates (and any continuous time-invariant design in these coordinates) are well defined, and the unavoidable exclusion of the undefined set is precisely the coordinate singularity that permits circumventing Brockett’s obstruction. In Cartesian coordinates, $\mathcal D$ corresponds to $\{(x,y,z,\psi,\theta)\in\mathbb R^5\mid x^2+y^2>0\}$, i.e., all initial conditions except the measure-zero $z$-axis where the coordinate transformation is undefined.
\subsection{Exponential Attractivity in Cartesian Coordinates}
\label{subsec:cartesian_bounds}

While Theorem~\ref{thm:GES_main} establishes GES in spherical coordinates on $\mathcal D$, one cannot infer exponential \emph{stability} of the closed-loop system in Cartesian coordinates $(x,y,z,\psi,\theta)$, a fundamental restriction also noted in the planar unicycle case~\cite[Cor. 2]{Part1_todorovskiCLF2025}. This is not a drawback of the construction. It is consistent with the topological obstruction behind Brockett’s condition and with the classical fact that static stabilization of a 3D nonholonomic vehicle in Cartesian coordinates is impossible, even with discontinuous feedback \cite[Remark~1.6]{coron1994relation}, \cite[pg.~43]{praly2022fonctions}. Nevertheless, the spherical exponential estimate does yield the practically relevant conclusion that $(x,y,z,\psi,\theta)=(0,0,0,0,0)$ is attractive and, in fact, exponentially convergent for every initial condition with $x_0^2+y_0^2>0$. Corollary~\ref{cor:cartesian_GES} formalizes this implication.

\begin{corollary}
\label{cor:cartesian_GES}
Consider the system~\eqref{eq:cart_3d_sys} in closed-loop with the control law~\eqref{eq:v_ctrl}, \eqref{eq:pitch_ctrl}, and~\eqref{eq:yaw_ctrl} with~\eqref{eq:omega_actual}. Then, for all initial conditions $(x_0,y_0,z_0,\psi_0,\theta_0)\in\mathbb{R}^5$ such that $x_0^2 + y_0^2 > 0$, the following holds:
\begin{align}\label{eq:cart_GES_bound}
&|x(t)|+|y(t)|+|z(t)|+|\psi(t)|+|\theta(t)|\nonumber\\
&\le \beta\Biggl(|x_0|+|y_0| + |z_0| + |\psi_0|+|\theta_0|\nonumber\\
&\qquad +|{\rm atan2}(-y_0,-x_0)| + \frac{|z_0|}{\sqrt{x_0^2 + y_0^2}}\,, \,t\Biggr),
\end{align}
where $\beta \in \mathcal{KL}$ is defined as $\beta(r,t) \coloneqq 2\sqrt{3}Kre^{-\lambda(t-t_0)}$.
\end{corollary}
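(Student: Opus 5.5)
The plan is to sandwich the Cartesian norm between multiples of the spherical metric $|\Theta|_{\mathcal D}$ and then invoke the GES estimate \eqref{eq:ges_bound} of Theorem~\ref{thm:GES_main}. The initial condition satisfies $x_0^2+y_0^2>0$, so $\Theta(t_0)\in\mathcal D$. By the barrier argument after Theorem~\ref{thm:GES_main}, $\Theta(t)\in\mathcal D$ for all $t\ge t_0$. The spherical coordinates in Table~\ref{tab:spherical_coords} are then well defined along the whole closed-loop solution, and the solution of \eqref{eq:cart_3d_sys} is recovered from $\Theta(t)$ by the inverse map
\begin{align*}
x &= -\rho\cos\zeta\cos\delta, & y &= -\rho\cos\zeta\sin\delta, & z &= \rho\sin\zeta,\\
\psi &= \delta-\gamma, & \theta &= \theta.
\end{align*}
Here $\delta,\gamma$ are taken as the continuous lifts generated by \eqref{eq:spheric_3d_sys}. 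This is consistent with the Cartesian flow because \eqref{eq:spheric_3d_sys} was derived from \eqref{eq:cart_3d_sys}.

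\textbf{Upper bound in terms of $\Theta(t)$.} From the inverse map, $|x|+|y|\le 2\rho$ and $|z|\le\rho$, and also $|\psi|\le|\delta|+|\gamma|$. Hence
\[
|x|+|y|+|z|+|\psi|+|\theta|\le 3\rho+|\delta|+|\gamma|+|\theta|\le 3|\Theta|_{\mathcal D}.
\]
One can tighten the constant using $|x|+|y|\le\sqrt2\,\rho\cos\zeta$ and $|x|+|y|+|z|\le\sqrt3\,\rho$. The stated constant $2\sqrt3$ suggests the authors bound the position part by $\sqrt3\rho$ and then absorb the remaining terms, and any constant $\le 2\sqrt3$ suffices. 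Combining with \eqref{eq:ges_bound} gives
\[
|x(t)|+\dots+|\theta(t)|\le 2\sqrt3\,K\,|\Theta(t_0)|_{\mathcal D}\,e^{-\lambda(t-t_0)}.
\]

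\textbf{Lower bound for $\Theta(t_0)$.} We bound $|\Theta(t_0)|_{\mathcal D}$ by the Cartesian initial data. First, $\rho_0=\sqrt{x_0^2+y_0^2+z_0^2}\le|x_0|+|y_0|+|z_0|$. Second, $|\delta_0|=|{\rm atan2}(-y_0,-x_0)|$. Third, $|\gamma_0|\le|\delta_0|+|\psi_0|$. Finally, $|\tan\zeta_0|=|z_0|/\sqrt{x_0^2+y_0^2}$, and $|\theta_0|$ is unchanged. Summing,
\[
|\Theta(t_0)|_{\mathcal D}\le |x_0|+|y_0|+|z_0|+|\psi_0|+|\theta_0|+2|{\rm atan2}(-y_0,-x_0)|+\frac{|z_0|}{\sqrt{x_0^2+y_0^2}}.
\]
Because of the factor $2$ on the atan2 term, this is at most twice the argument $r$ appearing in \eqref{eq:cart_GES_bound}. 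The bookkeeping of constants is where care is needed. I would use the sharper estimate $|x|+|y|+|z|\le\sqrt3\rho$ for the upper bound, so that its constant stays $\le\sqrt3$, and absorb the factor $2$ from the lower bound to reach $2\sqrt3K$. If the constant does not close exactly, I would simply enlarge it, since this does not affect the $\mathcal{KL}$ conclusion. That $\beta(r,t)=2\sqrt3Kre^{-\lambda(t-t_0)}$ is class $\mathcal{KL}$ in $(r,t-t_0)$ is immediate.

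\textbf{Main obstacle.} The delicate point is the consistency of $\psi(t)=\delta(t)-\gamma(t)$ with the real-valued Cartesian yaw. The definition $\gamma={\rm atan2}(-y,-x)-\psi$ fixes the branch only at $t_0$; afterwards $\delta$ must be the continuous lift, not the wrapped ${\rm atan2}$. I would handle this by defining $\Theta(t_0)$ from the Cartesian data via the table and propagating by \eqref{eq:spheric_3d_sys}. Then $\psi:=\delta-\gamma$ satisfies $\dot\psi=\dot\delta-\dot\gamma=\tilde r=r/\cos\theta$, so by uniqueness of solutions it coincides with the Cartesian yaw, and the position formulas hold by the same uniqueness argument.
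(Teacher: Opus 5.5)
Your proposal is correct and follows the paper's argument: bound the Cartesian sum by $\sqrt3|\Theta|_{\mathcal D}$ using $|x|+|y|+|z|\le\sqrt3\rho$ and $|\psi|\le|\delta|+|\gamma|$, apply \eqref{eq:ges_bound}, and bound $|\Theta(t_0)|_{\mathcal D}$ by twice the stated argument, so the constant closes exactly at $\sqrt3\cdot 2=2\sqrt3$ and no enlargement is needed. You can drop the intermediate constant $3$ and the display with $2\sqrt3K|\Theta(t_0)|_{\mathcal D}$; your treatment of continuous lifts and uniqueness (so that $\psi=\delta-\gamma$ is the real-valued Cartesian yaw) adds rigor that the paper leaves implicit.
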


Corollary~\ref{cor:cartesian_GES} follows immediately from Theorem~\ref{thm:GES_main} by translating the spherical exponential estimate~\eqref{eq:ges_bound} into Cartesian variables through the coordinate relations in Table~\ref{tab:spherical_coords}. In particular, $\rho=\sqrt{x^2+y^2+z^2}$ and $\psi=\delta-\gamma$ imply $|x|+|y|+|z|\le \sqrt{3}\,\rho$ and $|\psi|\le|\delta|+|\gamma|$, while $x_0^2+y_0^2>0$ gives $|\tan\zeta_0|=|z_0|/\sqrt{x_0^2+y_0^2}$. Then,~\eqref{eq:cart_GES_bound} follows directly from~\eqref{eq:ges_bound}.

% \begin{proof}
% By Table~\ref{tab:spherical_coords}, $\rho=\sqrt{x^2+y^2+z^2}$ and $\psi=\delta-\gamma$.
% Hence $|x|+|y|+|z|\le \sqrt{3}\,\rho$ and $|\psi|\le |\delta|+|\gamma|$, which implies
% \begin{align}
% |x|+|y|+|z|+|\psi|+|\theta|
% &\le \sqrt{3}\rho + |\delta|+|\gamma|+|\theta|
% \le \sqrt{3}\big(\rho+|\delta|+|\gamma|+|\tan\zeta|+|\theta|\big)
% = \sqrt{3}\,|\Theta|_{\mathcal D}.
% \end{align}
% Combining this inequality with the GES estimate
% $|\Theta(t)|_{\mathcal D}\le K|\Theta(t_0)|_{\mathcal D}e^{-\lambda(t-t_0)}$
% from Theorem~\ref{thm:GES_main} yields~\eqref{eq:cart_GES_bound}.
% Finally, \eqref{eq:Theta_metric_cartesian} follows by substituting the coordinate
% definitions $\rho=\sqrt{x^2+y^2+z^2}$, $\delta={\rm atan2}(y,x)+\pi$,
% $\gamma={\rm atan2}(y,x)-\psi+\pi$, and $\tan\zeta=z/\sqrt{x^2+y^2}$ into
% \eqref{eq:metricD}.
% \end{proof}
% ============================================================
% ============================================================
\begin{figure}[t]
    \centering
    \begin{subfigure}{\linewidth}
        \centering
        \includegraphics[width=.75\linewidth]{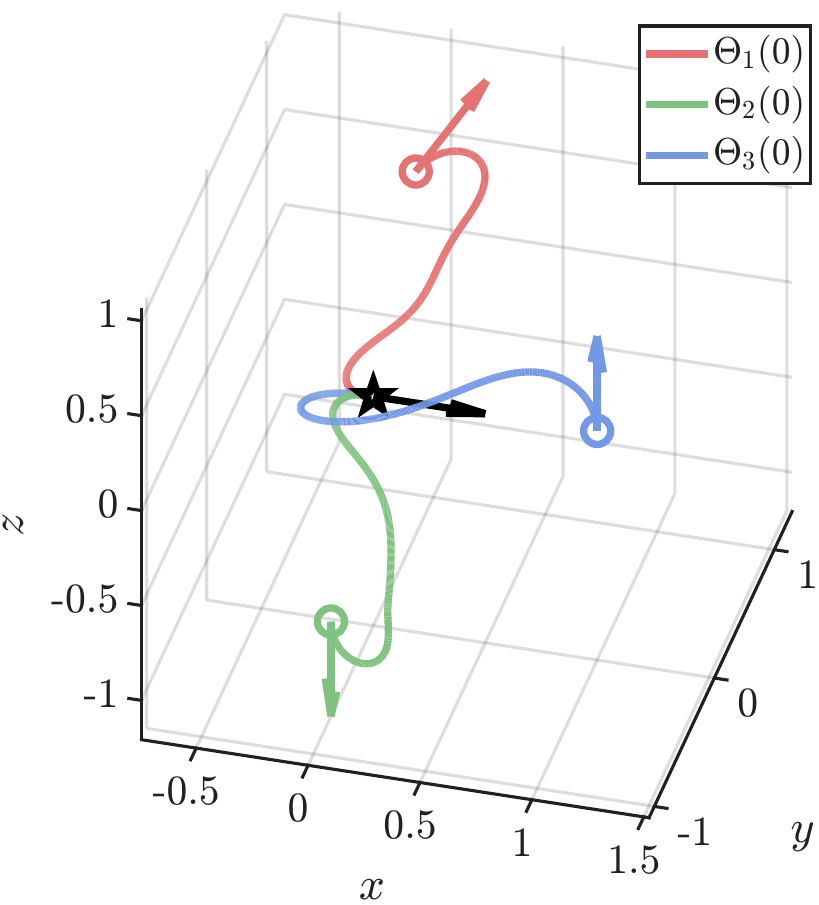}
        \caption{Closed-loop trajectories starting from three initial conditions $\Theta_1(0) = (1, -\pi/2, -3\pi/4, \pi/4, \pi/4)$, $\Theta_2(0) = (1, \pi/2, \pi/2, -\pi/4, -\pi/2)$, and $\Theta_3(0) = (1, \pi, 0, 0, \pi/2)$. The control gains are chosen as $(k_1,k_2,k_3,k_4,k_5) = (0.6,0.5,0.25,1,0.5)$.}
        \label{fig:traj}
    \end{subfigure}
    
    \vspace{0.3cm}
    \begin{subfigure}{\linewidth}
        \centering
        \includegraphics[width=.75\linewidth]{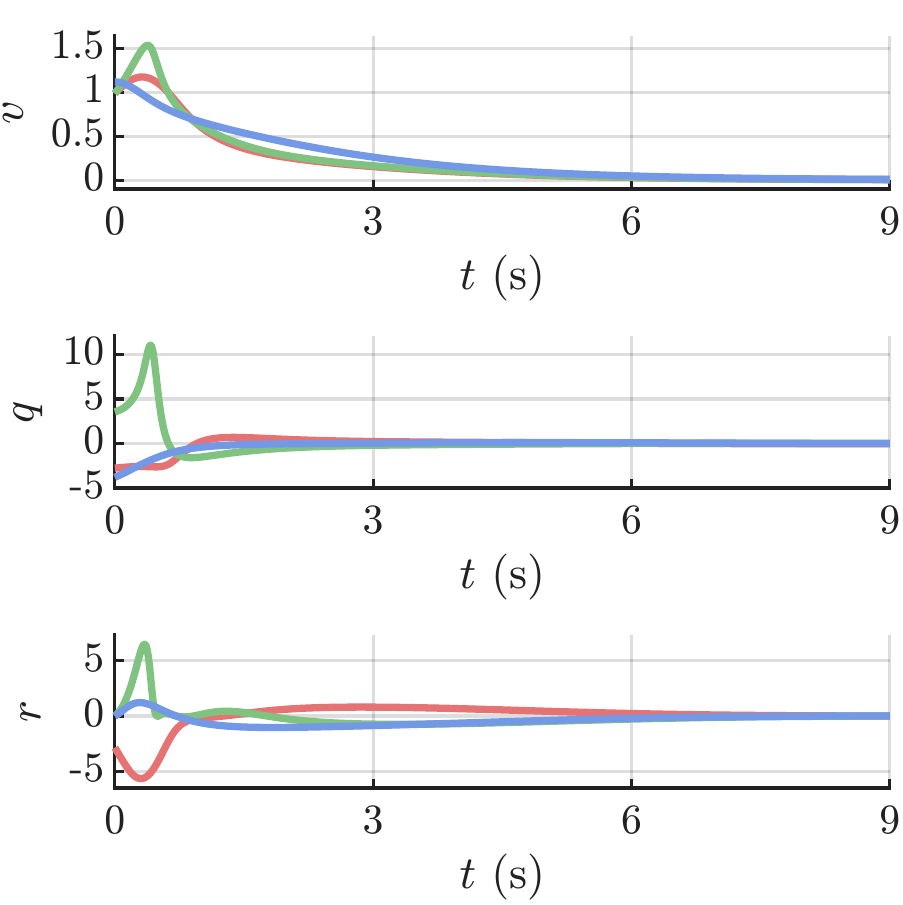}
        \caption{Time evolution of the control inputs: forward surge velocity $v$, pitch rate $q$, and yaw rate $r$. The inputs remain bounded and continuous, smoothly decaying to zero as the vehicle approaches the target.}
        \label{fig:inputs}
    \end{subfigure}
    \caption{Closed-loop system trajectory and corresponding control inputs.}
    \label{fig:traj_and_inputs}
\end{figure}

\begin{figure}[t]
    \centering
    \begin{subfigure}{\linewidth}
        \centering
        \includegraphics[width=\linewidth]{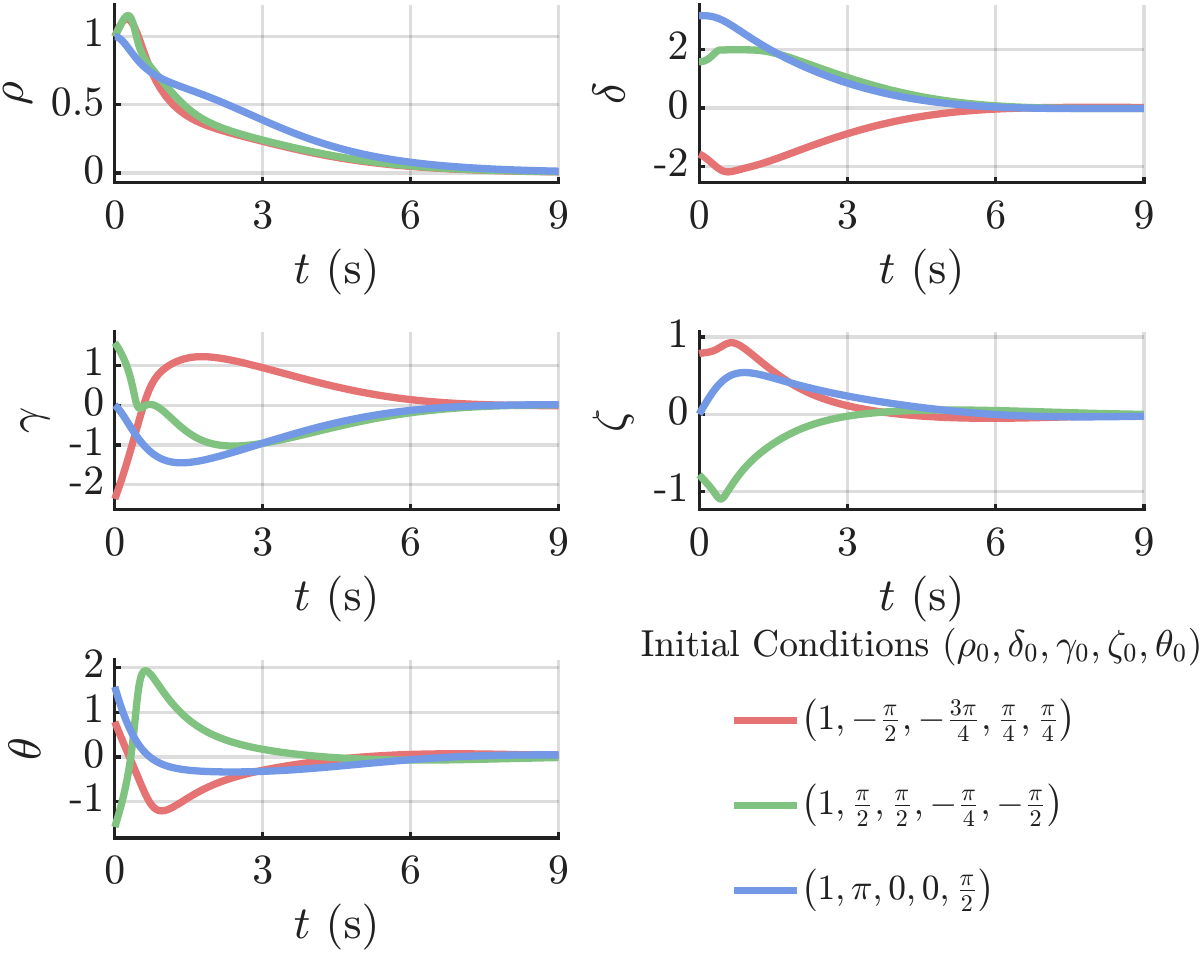}
        \caption{Transient response of the spherical coordinate states $\Theta(t)$. The system approaches the origin while the elevation angle $\zeta(t)$ avoids the singular region at $\pm \pi/2$, satisfying the strict barrier guarantees of the CLF.}
        \label{fig:sph_states}
    \end{subfigure}
    
    \vspace{0.3cm}
    \begin{subfigure}{\linewidth}
        \centering
        \includegraphics[width=\linewidth]{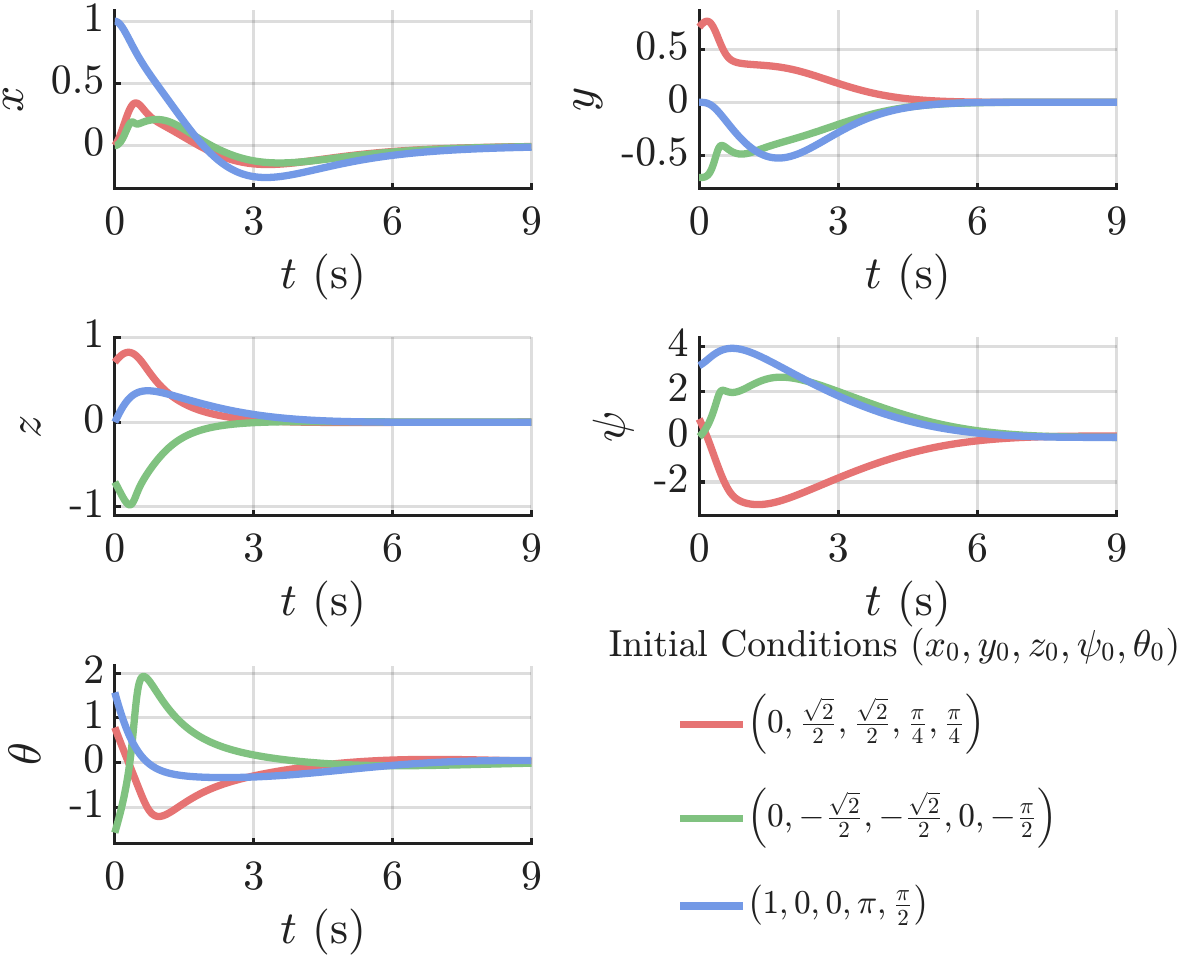}
        \caption{Corresponding time evolution of the Cartesian position $(x(t), y(t), z(t))$ and orientation $(\psi(t), \theta(t))$ states.}
        \label{fig:cart_states}
    \end{subfigure}
    \caption{Time evolution of the spherical and Cartesian closed-loop states.}
    \label{fig:states}
\end{figure}

%%%%%%%%%%%%%%%%%%%%%%%%%%%%%%%%%%%%%%%%%%%%%%%%%%%%%%%%%%%%%%%%%%%%%%%%%%%%%%%%

\section{Simulation Results}

We evaluate the closed-loop performance of the proposed controller from three initial conditions spanning different octants and orientations, with the resulting spatial paths, control inputs, and state transients shown in Fig.~\ref{fig:traj_and_inputs} and Fig.~\ref{fig:states}. The vehicle parks smoothly at the origin, with the Cartesian position and orientation converging exponentially to zero and without the chattering or oscillations typical of time-varying and discontinuous nonholonomic controllers. This transient follows directly from the exponentially stable spherical closed loop (Fig.~\ref{fig:sph_states}), in which the elevation angle stays away from the coordinate singularity at $\zeta = \pm\pi/2$, while the control inputs remain continuous throughout the maneuver (Fig.~\ref{fig:inputs}).
% To validate the theoretical guarantees of the proposed continuous time-invariant controller, we evaluate the closed-loop performance of the 3D nonholonomic vehicle. To demonstrate global exponential stability on the domain $\mathcal{D}$, the system is tested from three distinct initial conditions spanning different octants and orientations. The resulting spatial paths, control inputs, and state transients are shown in Fig.~\ref{fig:traj_and_inputs} and Fig.~\ref{fig:states}.

% As illustrated in Fig.~\ref{fig:traj} and Fig.~\ref{fig:states}, the vehicle smoothly parks at the origin, with the Cartesian position and orientation exponentially converging to zero without the chattering or unnecessary oscillations typical of time-varying and discontinuous nonholonomic controllers. This well-behaved Cartesian transient is a direct consequence of the exponentially stable spherical closed-loop system (Fig.~\ref{fig:sph_states}), where the coordinate singularity $\zeta = \pm \pi/2$ is actively avoided. Furthermore, the time-invariant control inputs are continuous throughout the maneuver (Fig.~\ref{fig:inputs}), demonstrating the practicality of the proposed stabilization strategy.

%%%%%%%%%%%%%%%%%%%%%%%%%%%%%%%%%%%%%%%%%%%%%%%%%%%%%%%%%%%%%%%%%%%%%%%%%%%%%%%%

\section{Conclusion}

In this paper, we presented a continuous, time-invariant feedback law that achieves global exponential stabilization for a 3D nonholonomic vehicle actuated exclusively through forward surge velocity, pitch rate, and yaw rate. By reformulating the kinematics into spherical coordinates, we leveraged the coordinate singularity to circumvent Brockett's necessary condition and utilized a systematic backstepping procedure to explicitly construct a strict CLF. This design, in Cartesian coordinates, guarantees exponential attractivity to the origin from a near global region of attraction, excluding only the unavoidable codimension-two, measure-zero set. However, because the demanded control effort can grow unbounded if the vehicle is initialized arbitrarily close to the $z$-axis, practical physical implementation requires handling these extreme inputs. Consequently, future work will leverage the newly constructed strict CLF to develop inverse optimal control redesigns, enabling the formal integration of user-defined actuator constraints and inherent robustness margins.

%%%%%%%%%%%%%%%%%%%%%%%%%%%%%%%%%%%%%%%%%%%%%%%%%%%%%%%%%%%%%%%%%%%%%%%%%%%%%%%%
\bibliographystyle{IEEEtranS}
\bibliography{bib,bib-Dubins,root}

% Generated by IEEEtranS.bst, version: 1.14 (2015/08/26)
\begin{thebibliography}{10}
\providecommand{\url}[1]{#1}
\csname url@samestyle\endcsname
\providecommand{\newblock}{\relax}
\providecommand{\bibinfo}[2]{#2}
\providecommand{\BIBentrySTDinterwordspacing}{\spaceskip=0pt\relax}
\providecommand{\BIBentryALTinterwordstretchfactor}{4}
\providecommand{\BIBentryALTinterwordspacing}{\spaceskip=\fontdimen2\font plus
\BIBentryALTinterwordstretchfactor\fontdimen3\font minus \fontdimen4\font\relax}
\providecommand{\BIBforeignlanguage}[2]{{%
\expandafter\ifx\csname l@#1\endcsname\relax
\typeout{** WARNING: IEEEtranS.bst: No hyphenation pattern has been}%
\typeout{** loaded for the language `#1'. Using the pattern for}%
\typeout{** the default language instead.}%
\else
\language=\csname l@#1\endcsname
\fi
#2}}
\providecommand{\BIBdecl}{\relax}
\BIBdecl

\bibitem{aguiar2007trajectory}
A.~P. Aguiar and J.~P. Hespanha, ``Trajectory-tracking and path-following of underactuated autonomous vehicles with parametric modeling uncertainty,'' \emph{IEEE Transactions on Automatic Control}, vol.~52, no.~8, pp. 1362--1379, 2007.

\bibitem{aguiar2002global}
A.~P. Aguiar and A.~M. Pascoal, ``Global stabilization of an underactuated autonomous underwater vehicle via logic-based switching,'' in \emph{Proceedings of the 41st IEEE Conference on Decision and Control ({CDC})}, vol.~3.\hskip 1em plus 0.5em minus 0.4em\relax IEEE, 2002, pp. 3267--3272.

\bibitem{aicardi1995}
M.~Aicardi, G.~Casalino, A.~Bicchi, and A.~Balestrino, ``Closed loop steering of unicycle like vehicles via {L}yapunov techniques,'' \emph{IEEE Robotics \& Automation Magazine}, vol.~2, no.~1, pp. 27--35, 1995.

\bibitem{aicardi2001cusp}
M.~Aicardi, G.~Cannata, G.~Casalino, and G.~Indiveri, ``Cusp-free, time-invariant, 3d feedback control law for a nonholonomic floating robot,'' \emph{The International Journal of Robotics Research}, vol.~20, no.~4, pp. 300--311, 2001.

\bibitem{alonge2001trajectory}
F.~Alonge, F.~D'Ippolito, and F.~M. Raimondi, ``Trajectory tracking of underactuated underwater vehicles,'' in \emph{Proceedings of the 40th IEEE Conference on Decision and Control (CDC)}, vol.~5.\hskip 1em plus 0.5em minus 0.4em\relax IEEE, 2001, pp. 4421--4426.

\bibitem{bloch1996stabilization_slidingmode}
A.~Bloch and S.~Drakunov, ``Stabilization and tracking in the nonholonomic integrator via sliding modes,'' \emph{Systems \& Control Letters}, vol.~29, no.~2, pp. 91--99, 1996.

\bibitem{brockett1983asymptotic}
R.~W. Brockett, ``{Asymptotic Stability and Feedback Stabilization},'' in \emph{Differential Geometric Control Theory}, ser. Progress in Mathematics, R.~W. Brockett, R.~S. Millman, and H.~J. Sussmann, Eds.\hskip 1em plus 0.5em minus 0.4em\relax Birkh{\"a}user Boston, 1983, vol.~27, pp. 181--191.

\bibitem{caccia2000guidance}
M.~Caccia and G.~Veruggio, ``Guidance and control of a reconfigurable unmanned underwater vehicle,'' \emph{Control engineering practice}, vol.~8, no.~1, pp. 21--37, 2000.

\bibitem{chitsaz2007time}
H.~Chitsaz and S.~M. LaValle, ``Time-optimal paths for a {Dubins} airplane,'' in \emph{Proceedings of the 46th IEEE Conference on Decision and Control ({CDC})}.\hskip 1em plus 0.5em minus 0.4em\relax IEEE, 2007, pp. 2379--2384.

\bibitem{coron1992global_controllable}
J.-M. Coron, ``Global asymptotic stabilization for controllable systems without drift,'' \emph{Mathematics of Control, Signals and Systems}, vol.~5, no.~3, pp. 295--312, 1992.

\bibitem{coron1994relation}
J.-M. Coron and L.~Rosier, ``A relation between continuous time-varying and discontinuous feedback stabilization,'' \emph{J. Math. Syst., Estimation, Control}, vol.~4, pp. 67--84, 1994.

\bibitem{deluca1998feedback}
A.~De~Luca, G.~Oriolo, and C.~Samson, ``Feedback control of a nonholonomic car-like robot,'' in \emph{Robot Motion Planning and Control}, ser. Lecture Notes in Control and Information Sciences, J.-P. Laumond, Ed.\hskip 1em plus 0.5em minus 0.4em\relax Springer-Verlag, 1998, vol. 229, pp. 171--253.

\bibitem{de2000stabilization}
A.~De~Luca, G.~Oriolo, and M.~Vendittelli, ``Stabilization of the unicycle via dynamic feedback linearization,'' \emph{IFAC Proceedings Volumes}, vol.~33, no.~27, pp. 687--692, 2000.

\bibitem{do2002global}
K.~D. Do, Z.-P. Jiang, J.~Pan, and H.~Nijmeijer, ``Global output feedback universal controller for stabilization and tracking of underactuated {ODIN}--an underwater vehicle,'' in \emph{Proceedings of the 41st IEEE Conference on Decision and Control (CDC)}, vol.~1.\hskip 1em plus 0.5em minus 0.4em\relax IEEE, 2002, pp. 504--509.

\bibitem{egeland1994exponential}
O.~Egeland, E.~Berglund, and O.~J. S{\o}rdalen, ``Exponential stabilization of a nonholonomic underwater vehicle with constant desired configuration,'' in \emph{Proceedings of the IEEE International Conference on Robotics and Automation (ICRA)}.\hskip 1em plus 0.5em minus 0.4em\relax IEEE, 1994, pp. 20--25.

\bibitem{egeland1996feedback}
O.~Egeland, M.~Dalsmo, and O.~J. Soerdalen, ``Feedback control of a nonholonomic underwater vehicle with a constant desired configuration,'' \emph{The International journal of robotics research}, vol.~15, no.~1, pp. 24--35, 1996.

\bibitem{fossen1994guidance}
T.~I. Fossen, \emph{Guidance and control of ocean vehicles}.\hskip 1em plus 0.5em minus 0.4em\relax Chichester: John Wiley \& Sons, 1994.

\bibitem{fossen2023alos}
------, ``An adaptive line-of-sight ({ALOS}) guidance law for path following of aircraft and marine craft,'' \emph{IEEE Transactions on Control Systems Technology}, vol.~31, no.~6, pp. 2887--2894, 2023.

\bibitem{fossen2024alos3d}
T.~I. Fossen and A.~P. Aguiar, ``A uniform semiglobal exponential stable adaptive line-of-sight ({ALOS}) guidance law for {3-D} path following,'' \emph{Automatica}, vol. 163, p. 111556, 2024.

\bibitem{wang24_force_controlled_safestable}
T.~Han and B.~Wang, ``Safety-critical stabilization of force-controlled nonholonomic mobile robots,'' \emph{IEEE Control Systems Letters}, vol.~8, pp. 2469--2474, 2024.

\bibitem{Heetal2022exp3D}
X.~He, Z.~Sun, Z.~Geng, and A.~Robertsson, ``Exponential set-point stabilization of underactuated vehicles moving in three-dimensional space,'' \emph{IEEE/CAA Journal of Automatica Sinica}, vol.~9, no.~2, pp. 270--282, 2022.

\bibitem{hespanha1999_hybrid_stabilization}
J.~P. Hespanha and A.~S. Morse, ``Stabilization of nonholonomic integrators via logic-based switching,'' \emph{Automatica}, vol.~35, no.~3, pp. 385--393, 1999.

\bibitem{jiang2010controlling}
Z.-P. Jiang, ``Controlling underactuated mechanical systems: A review and open problems,'' \emph{Advances in the theory of control, signals and systems with physical modeling}, pp. 77--88, 2010.

\bibitem{Part2_kimIOC2025}
\BIBentryALTinterwordspacing
K.~H. Kim, V.~Todorovski, and M.~Krstic, ``Nonholonomic {R}obot {P}arking by {F}eedback -- {P}art {II}: {N}onmodular, {I}nverse {O}ptimal, {A}daptive, {P}rescribed/{F}ixed-{T}ime and {S}afe {D}esigns,'' \emph{arXiv preprint arXiv:2511.15219}, 2025. [Online]. Available: \url{https://arxiv.org/abs/2511.15219}
\BIBentrySTDinterwordspacing

\bibitem{lapierre2003nonlinear}
L.~Lapierre, D.~Soetanto, and A.~Pascoal, ``Nonlinear path following with applications to the control of autonomous underwater vehicles,'' in \emph{Proceedings of the 42nd IEEE Conference on Decision and Control ({CDC})}, vol.~2.\hskip 1em plus 0.5em minus 0.4em\relax IEEE, 2003, pp. 1256--1261.

\bibitem{li2018design}
Y.~Li, Y.~Li, and Q.~Wu, ``Design for three-dimensional stabilization control of underactuated autonomous underwater vehicles,'' \emph{Ocean Engineering}, vol. 150, pp. 327--336, 2018.

\bibitem{lugo2014dubins}
I.~Lugo-C{\'a}rdenas, G.~Flores, S.~Salazar, and R.~Lozano, ``Dubins path generation for a fixed wing uav,'' in \emph{2014 International conference on unmanned aircraft systems (ICUAS)}.\hskip 1em plus 0.5em minus 0.4em\relax IEEE, 2014, pp. 339--346.

\bibitem{pettersen1999time}
K.~Y. Pettersen and O.~Egeland, ``Time-varying exponential stabilization of the position and attitude of an underactuated autonomous underwater vehicle,'' \emph{IEEE Transactions on Automatic Control}, vol.~44, no.~1, pp. 112--115, 1999.

\bibitem{praly2022fonctions}
\BIBentryALTinterwordspacing
L.~Praly and D.~Bresch-Pietri, \emph{Fonctions de Lyapunov, stabilit{\'e}, stabilisation et att{\'e}nuation de perturbations: Partie 2 Stabilisation}.\hskip 1em plus 0.5em minus 0.4em\relax Spartacus-idh, 2022, no. pt. 2. [Online]. Available: \url{https://spartacus-idh.com/liseuse/118/#page/1}
\BIBentrySTDinterwordspacing

\bibitem{prieur2003robust}
C.~Prieur and A.~Astolfi, ``Robust stabilization of chained systems via hybrid control,'' \emph{IEEE Transactions on Automatic Control}, vol.~48, no.~10, pp. 1768--1772, 2003.

\bibitem{restrepo_3d_2019}
E.~Restrepo, I.~Sarras, A.~Loria, and J.~Marzat, ``3d uav navigation with moving-obstacle avoidance using barrier lyapunov functions,'' \emph{IFAC-PapersOnLine}, vol.~52, no.~12, pp. 49--54, 2019.

\bibitem{ryan1994brockett}
E.~P. Ryan, ``On {B}rockett’s condition for smooth stabilizability and its necessity in a context of nonsmooth feedback,'' \emph{SIAM Journal on Control and Optimization}, vol.~32, no.~6, pp. 1597--1604, 1994.

\bibitem{samson1990velocity}
C.~Samson, ``Velocity and torque feedback control of a nonholonomic cart,'' in \emph{Advanced Robot Control: Proceedings of the International Workshop on Nonlinear and Adaptive Control: Issues in Robotics, Grenoble, France}.\hskip 1em plus 0.5em minus 0.4em\relax Springer-Verlag, 1990, vol. 162, pp. 125--151.

\bibitem{Part1_todorovskiCLF2025}
\BIBentryALTinterwordspacing
V.~Todorovski, K.~H. Kim, A.~Astolfi, and M.~Krstic, ``Nonholonomic {R}obot {P}arking by {F}eedback -- {P}art {I}: Modular {S}trict {CLF} {D}esigns,'' \emph{arXiv preprint arXiv:2511.15119}, 2025. [Online]. Available: \url{https://arxiv.org/abs/2511.15119}
\BIBentrySTDinterwordspacing

\bibitem{vamvoudakis2022nonequilibrium}
K.~G. Vamvoudakis, F.~Fotiadis, A.~Kanellopoulos, and N.-M.~T. Kokolakis, ``Nonequilibrium dynamical games: A control systems perspective,'' \emph{Annual Reviews in Control}, vol.~53, pp. 6--18, 2022.

\end{thebibliography}

\end{document}